\author{Yilin Wang}
\address{Department of Mathematics\\
 ETH\\ 
 R\"amistrasse 101,
Z\"urich 8092, Switzerland}
\email[]{yilin.wang@math.ethz.ch}
\title[Loewner energy, conformal restriction and Werner's measure]{A note on Loewner energy, conformal restriction and Werner's measure on self-avoiding loops}
\thanks{Supported by the Swiss National Science Foundation grant \#175505.}
\date{}
\newtheorem{thm}{Theorem}[section]
\newtheorem{cor}[thm]{Corollary}
\newtheorem{lem}[thm]{Lemma}
\newtheorem{prop}[thm]{Proposition}
\newtheorem{theorem}{Theorem}
\setlist[enumerate]{topsep = 1ex, leftmargin=1.5cm}
\theoremstyle{definition} 
\theoremstyle{remark} 
\theoremstyle{remark} \newtheorem*{rk}{Remark}
\theoremstyle{remark} 
\theoremstyle{remark} 
\declaretheoremstyle[
    spaceabove=-2pt, 
    spacebelow=6pt, 
    headfont=\normalfont\bfseries, 
    bodyfont = \normalfont,
    postheadspace=1em, 
    qed=$\square$, 
    headpunct={\,:}]{mypf}
\newcommand{\abs}[1]{\left\lvert #1 \right \rvert}
\newcommand{\mc}[1]{\mathcal{#1}}
\newcommand{\m}[1]{\mathbb{#1}}
\newcommand{\rar}[0]{\rightarrow}
\def\g{\gamma}
\def\G{\Gamma}
\def\d{\delta}
\def\D{\Delta}
\def\k{\kappa}
\def\s{\sigma}
\def\vare{\varepsilon}
\def\Chat{\hat{\m{C}}}
\def\SLE{\operatorname{SLE}}
\def\dd{\,\mathrm{d}}
\def\vol{\mathrm{vol}}
\def\detz{\mathrm{det}_{\zeta}}
\begin{document}
%\doublespacing

\begin{abstract} 
In this note, we establish an expression of the Loewner energy of a Jordan curve on the Riemann sphere in terms of Werner's measure on simple loops of $\SLE_{8/3}$ type. 
 The proof is based on a formula for the change of the Loewner energy under a conformal map that is reminiscent of the restriction properties derived for SLE processes. 
\end{abstract}
   
\maketitle

\section{Introduction}
Loewner's idea \cite{Loewner1923} of encoding a simple curve into a real-valued driving function provides a powerful tool in the analysis of univalent functions on the unit disk $\m D$ by considering the Loewner flow associated to the boundary of their image.  
This idea led to the solution of the Bieberbach conjecture by De Branges \cite{DeBranges1985}, and 
has also more recently received a lot of attention since 1999 with the construction of random fractal simple curves, the SLEs, by Oded Schramm \cite{schramm2000scaling}. 

The study of the Loewner energy lies at the interface between classical Loewner theory and the SLE theory, as it is an deterministic quantity associated to regular deterministic curves 
but simultaneously reflects the large deviation structure of SLEs \cite{W1}, driven by a vanishing multiple of Brownian motion.
The Loewner energy is studied in \cite{Dubedat2007commutation,FrizShekhar2017,W1,RW,W2} in various similar settings. 
Let us review briefly its definition for chords, and then for loops as introduced by Rohde and the author \cite{RW}.

   Let $\g$ be a simple curve (chord) from $0$ to $\infty$ in the upper half-plane $\m H$, one chooses to parametrize $\g$ by the half-plane capacity of $\g [0,t]$ seen from infinity,   which means the conformal map $g_t$ from $\m H \backslash \g[0,t]$ to $\m H$, that is normalized near infinity by $g_t (z) = z + o(1)$ does satisfy $g_t (z) = z + 2t/z + o(1/z)$.
   The function $g_t$ can be extended continuously to the tip $\g_t$ of the slit $\g[0,t]$ which enables to define 
   $W(t) := g_t (\g_t)$. 
   The function $W: \m R_+ \to \m R$ is called the driving function of $\g$. 
The \emph{Loewner energy} in  $(\m H, 0 ,\infty)$ of the chord $\gamma$ is defined to be 
$$I_{\m H, 0 ,\infty}(\gamma) := I(W) : = \frac{1}{2}\int_{0}^{\infty} W'(t)^2 dt$$
when $W$ is absolutely continuous, and is $\infty$ otherwise. 
Notice that $I(W)$ is the action functional of the standard Brownian motion $B$, therefore the large deviation rate function of the law of $\sqrt \k B$ under appropriate norm as $\k \to 0$.

The definition extends to a chord $\g$ connecting two prime ends $a$ and $b$ in a simply connected domain $D\subset \m C$, via a uniformizing conformal map $\varphi: D \to \m H$ such that $\varphi(a) = 0$ and $\varphi (b) = \infty$, that is
$$I_{D,a,b} (\g) : = I_{\m H, 0, \infty} (\varphi (\g)).$$
Following \cite{RW}, we define \emph{Loewner loop energy} via a limiting procedure. 
Let $\G: [0,1] \to \m C$ be an oriented Jordan curve with a marked point $\G (0) = \G(1) \in \G$. 
For every $\vare>0$, $\G [\vare, 1]$ is a chord connecting $\G(\vare)$ to $\G (1)$ in the simply connected domain $\Chat \backslash \G[0, \vare]$, where $\Chat = \m C \cup \{\infty\} \simeq S^2$ is the Riemann sphere.
The rooted Loewner loop energy is then defined as
$$I^L(\G, \G(0)): = \lim_{\vare \to 0} I_{\Chat \backslash \G[0, \vare], \G(\vare), \G(0)} (\G[\vare, 1]).$$

It was observed in \cite{RW} that the Loewner energy depends only on its trace (in particular, not on its parametrization) which is a priori not obvious from the definition.
To understand the presence of these symmetries, three identities of the Loewner energy are established in \cite{W2}. 
For the purpose of the present work, let us review the link to the determinants of Laplacians.

Let $g$ be a Riemannian metric on the $2$-sphere $S^2$ and $\G$ a smooth Jordan curve on $S^2$.
We define
$$\mc H(\Gamma, g) : = \log \detz'(-\D_{S^2, g}) -  \log \vol_g(M)- \log \detz (-\D_{D_1, g}) -  \log \detz (-\D_{D_2,g}), $$
where $D_1, D_2 \subset S^2$ are the connected components of $S^2 \backslash \G$, $\D_{D_i, g}$ the  Laplacian on $D_i$ with Dirichlet boundary condition and $\detz'$ (resp. $\detz$) the zeta-regularized determinant of operators with non-trivial (resp. trivial) kernel. 
The zeta-regularization is introduced by Ray and Singer \cite{RaySinger1971}. More details on the functional $\mc H$ can be found e.g. in Section~7 of \cite{W2}. 

\begin{theorem}[\cite{W2} Proposition 7.1, Theorem 7.3] \label{thm_energy_det}
If $g = e^{2\s} g_0$ is conformally equivalent to the spherical metric $g_0$  on $S^2$, then
\begin{enumerate}[(i)]
\item  $\mc H (\cdot, g) = \mc H(\cdot, g_0)$;
\item \label{item_circle_minimize} circles minimize $\mc H(\cdot, g)$
 among all smooth Jordan curves;
\item \label{item_energy_determinant}
we have the identity
\begin{equation} \label{eq_energy_det}
I^L(\G, \G(0)) = 12 \mc H(\G, g) - 12 \mc H(S^1, g).
\end{equation}
\end{enumerate}
\end{theorem}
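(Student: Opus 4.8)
The plan is to prove (i) first, then use it together with the Polyakov--Alvarez conformal anomaly formula to establish (iii), and finally deduce (ii) as a corollary. The anomaly formula enters in its bulk form on the closed surface $S^2$ (for the quantity $\log\detz'(-\D_{S^2,g}) - \log\vol_g(S^2)$) and in its Dirichlet boundary form on each of the two domains $D_1, D_2$.

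\emph{Part (i).} Write $g = e^{2\s}g_0$. In all three cases the bulk anomaly integrand is, up to the universal constant $-\tfrac{1}{12\pi}$, equal to $\abs{\nabla_{g_0}\s}^2 + 2K_{g_0}\s$; since $S^2$ is the disjoint union of $D_1$, $\G$ and $D_2$ up to a null set, $\int_{D_1} + \int_{D_2} = \int_{S^2}$, so the bulk contributions cancel in the alternating combination defining $\mc H$. The only other Polyakov--Alvarez terms are boundary integrals over the common curve $\G$, built from $\partial_n\s$ and from $k\,\s$ with $k$ the geodesic curvature of $\G$; since both $\partial_n$ and $k$ change sign when $\G$ is viewed from $D_1$ versus $D_2$, these cancel as well, giving $\mc H(\cdot, g) = \mc H(\cdot, g_0)$.

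\emph{Part (iii).} By (i) we may take $g = g_0$. The terms $\log\detz'(-\D_{S^2,g_0})$ and $\log\vol_{g_0}(S^2)$ are independent of $\G$, so in $\mc H(\G, g_0) - \mc H(S^1, g_0)$ only the Dirichlet determinants survive; with $S^1$ the unit circle and complementary domains $\m D$, $\m D^* := \Chat\setminus\ad{\m D}$, this difference equals $\log\detz(-\D_{\m D, g_0}) + \log\detz(-\D_{\m D^*, g_0}) - \log\detz(-\D_{D_1, g_0}) - \log\detz(-\D_{D_2, g_0})$. After a preliminary M\"obius map we may assume $\infty\in D_2$; fix conformal maps $f\colon\m D\to D_1$ and $h\colon\m D^*\to D_2$ with $h(\infty)=\infty$, and $\mathrm{id}$ for the reference circle. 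Since $f$ is an isometry from $(\m D, f^*g_0)$ onto $(D_1, g_0)$, the corresponding Dirichlet determinants agree, so we may apply Polyakov--Alvarez to deform $f^*g_0 = e^{2v_f}\abs{dz}^2$ — with $v_f = \rho_0\circ f + \log\abs{f'}$ and $g_0 = e^{2\rho_0}\abs{dw}^2$ in the affine coordinate — to the flat metric on $\m D$, and likewise on $\m D^*$ in the chart at $\infty$; the flat-disk determinants then cancel between the two configurations. Now expand $\abs{\nabla v_f}^2 = \abs{\nabla(\rho_0\circ f)}^2 + 2\,\nabla(\rho_0\circ f)\cdot\nabla\log\abs{f'} + \abs{\nabla\log\abs{f'}}^2$. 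The first term integrates to $\int_{D_1}\abs{\nabla\rho_0}^2\,\dd A$ by conformal invariance of the Dirichlet integral, and together with its $D_2$-analogue it gives $\int_{S^2}\abs{\nabla\rho_0}^2$, which cancels against the identical expression coming from $\m D$ and $\m D^*$ in the reference. The last term contributes $\int_{\m D}\abs{f''/f'}^2\,\dd A$, and together with $\int_{\m D^*}\abs{h''/h'}^2\,\dd A$ this is, up to a normalization term in $\log\abs{f'(0)/h'(\infty)}$, the universal Liouville action, which by the earlier identification of the loop energy equals a universal constant times $I^L(\G, \G(0))$. The cross term, finally, is a boundary integral over $S^1$ because $\log\abs{f'}$ is harmonic on $\m D$. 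It then remains to check that these cross terms, the two genuine Polyakov--Alvarez boundary integrals for $\m D$ and $\m D^*$ (built from the geodesic curvature of the unit circle and from the normal derivatives of $v_f$ and $v_h$), and the corresponding data of the reference circle, assemble exactly into the normalization term of the Liouville action and reproduce the overall factor $12$; tracking the universal constant then gives \eqref{eq_energy_det}.

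\emph{Part (ii) and the main difficulty.} By (i) we may take $g=g_0$; by (iii), $\mc H(\G, g_0) - \mc H(S^1, g_0) = \tfrac{1}{12}I^L(\G, \G(0)) \ge 0$, with equality precisely when the driving function is constant, that is, when $\G$ is a circle, by \cite{RW}; and every circle attains the same value $\mc H(S^1, g_0)$ since $I^L$ vanishes on all circles. Hence circles are exactly the minimizers. I expect the main obstacle to lie in the bookkeeping inside (iii): matching the Polyakov--Alvarez boundary integrals and the harmonic cross terms between the two complementary domains and against the round reference, and pinning down the constant $12$ — plus a density argument if one wants the identity beyond smooth curves. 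A possible alternative to this boundary computation is to use the Burghelea--Friedlander--Kappeler gluing formula to write $\mc H(\G, g)$ through the Dirichlet-to-Neumann operators on $\G$ and to identify the resulting operator determinant in terms of the Grunsky operator of $\G$, which would also connect this identity to the Fredholm-determinant description of the loop energy.
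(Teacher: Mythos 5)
First, a point of order: the paper does not prove this statement --- it is Theorem~A, quoted from \cite{W2} (Proposition~7.1 and Theorem~7.3) and used as an input, so there is no in-paper proof to compare yours against. Judged on its own terms, your strategy is essentially the one used in the cited source: part (i) is exactly the Polyakov--Alvarez cancellation you describe (the bulk integrands over $D_1$ and $D_2$ reassemble into the $S^2$ integrand, and the boundary terms in $k\,\s$ and $\partial_n\s$ cancel because the outward normal flips sign across $\G$), and part (ii) does follow from part (iii) together with $I^L\ge 0$ and $I^L=0$ if and only if $\G$ is a circle (from \cite{RW}).

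The gap is in part (iii), which is where all the content lives. You reduce the determinant difference to $\int_{\m D}\abs{f''/f'}^2$ and $\int_{\m D^*}\abs{h''/h'}^2$ plus boundary and cross terms, and then write that ``it remains to check'' that these assemble into the universal Liouville action with the constant $12$; that bookkeeping \emph{is} the theorem, and deferring it leaves the argument incomplete. More seriously, the step converting the Liouville action into $I^L(\G,\G(0))$ invokes ``the earlier identification of the loop energy'' with the universal Liouville action --- but that identification is itself the main theorem of \cite{W2}, proved there by a separate argument (via the chordal energy, conformal welding and the Grunsky operator), and is not available as a free input here. So as written, part (iii) assumes an equivalent form of the statement rather than proving it. Your closing suggestion (the Burghelea--Friedlander--Kappeler gluing formula, Dirichlet-to-Neumann operators and the Grunsky operator) is a reasonable alternative route, but it would require the same missing input, namely the Fredholm-determinant description of $I^L$.
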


The right-hand side of \eqref{eq_energy_det} clearly does not depend on the parametrization of $\G$.
~\\

It was pointed out in \cite{Dubedat2009partition} and \cite{LeJan2006det} that 
$-\log \detz (-\D_M)$ can be thought as a renormalization of the total mass of Brownian loops contained in $M$ under the Brownian loop measure $\mu_M^{loop}$ introduced by Lawler and Werner \cite{LW2004loupsoup}. 
Roughly speaking,
   $$``- \log \detz (-\D_M) = \int d \mu_{M}^{loop}".$$
   Hence the identity \eqref{eq_energy_det} can be interpreted as
   $$``I^L(\Gamma) = 12 \mu_{\m C}^{loop} (\d, \d \cap S^1 \neq \emptyset)- 12 \mu_{\m C}^{loop} (\d, \d \cap \G \neq \emptyset). " $$
   However, both terms on the right-hand side diverge.
   In this work, we make sense of the above identity for all finite energy Jordan curves as a renormalization of Werner's measure which is closely related to the Brownian loop measure. 
   The precise statement is in Theorem~\ref{thm_cut_off}.
As an intermediate step, we derive a variation formula for the Loewner energy under a conformal mapping (Theorem~\ref{thm_restr_loop}) which also has independent interest.

Let us make some very loose comments at the end of this introduction.
The definition of the Loewner energy was motivated by the fact that $I(\cdot)$ is the large deviation rate function of the driving function of $\SLE_\k$. 
Therefore, the Loewner energy quantifies the decay of a certain volume of the infinitesimal neighborhood of a given curve measured by the total mass of $\SLE_{0+}$ contained in the neighborhood.
In the same line of thought, the results in the present work also describe the Loewner energy as a measurement of the total mass of Brownian loops contained in the infinitesimal neighborhood. 
\vspace{20 pt}

\noindent{\bf Acknowledgement:}
   I thank Wendelin~Werner for inspiring discussions and his help to improve the manuscript.

\section{Werner's measure on self-avoiding loops}
In this section, we briefly recall the definition and main features of Werner's measure on self-avoiding loops.  

  The \emph{Brownian loop measure} on the complex plane $\m C$ and its sub-domains 
  has been introduced by Lawler and Werner in \cite{LW2004loupsoup} and its definition can be immediately extended to a general Riemannian surface $M$ (with or without boundary) in the following way: 
  
  Let $x \in M$, $t > 0$, consider the sub-probability measure $\m W^t_x$ on the path of the Brownian motion (diffusion of infinitesimal generator the Laplace-Beltrami operator $\D_M$) on $M$ started from $x$ on the time interval $[0,t]$, which is killed if it hits the boundary of $M$. The measures $W^t_{x\to y}$ on paths from $x$ to $y$ are obtained from the disintegration of $\m W^t_x$ according to its endpoint $y$:
  $$\m W^t_x = \int_{M} \m W^t_{x\to y} \dd\vol (y).$$
  Define the \emph{Brownian loop measure} on $M$:
  $$\mu_M^{loop}: = \int_0^{\infty} \frac{\dd t}{t} \int_M \m W_{x \to x}^t \dd \vol(x).$$
  Since the starting points coincide with the endpoints, it is a measure on the set of unrooted loops
   by forgetting the starting point and the time-parametrization (so that we distinguish loops only by their trace). 
   
   The Brownian loop measure satisfies the following two remarkable properties
   \begin{itemize}  
     \item \emph{(Restriction property)} If $M' \subset M$, then 
     $d \mu_{M'}^{loop} (\d)= 1_{\g \in M'}  d \mu_M^{loop} (\d)$.
     \item \emph{(Conformal invariance)} On the surfaces $M_1 = (M, g)$ and $M_2 = (M, e^{2\sigma} g)$ be two conformally equivalent Riemann surface, where $\sigma \in C^{\infty} (M, \m R)$, then
     $$\mu^{loop}_{M_1} = \mu^{loop}_{M_2}.$$
   \end{itemize}
   Notice that the total mass (under the Brownian loop measure) of loops contained in $\m C$ is infinite (in fact, that for all positive $R$, both the mass of loops of diameter greater than $R$ and the mass of loops of diameter smaller than $R$ are both infinite),  which can be viewed as a consequence of its scale-invariance (or from the fact that the integral of $1/t$ diverges both at infinity 
   and at $0$). 
  However,  when $D \subset \m C$ is a proper subset of $\m C$ with non-polar boundary, and $K_1, K_2$ are two disjoint compact subsets of $D$, the total mass (under the Brownian loop measure) of the set of loops 
  that do stay in $D$ and intersect both $K_1$ and $K_2$ is finite (staying in $D$ in some sense removes most large loops, and intersecting both $K_1$ and $K_2$ prevents the loops for being too small). 
 We will denote this finite mass by 
$$\mc B(K_1, K_2; D) := \mu^{loop}_D (\{\d; \d \cap K_1 \neq \emptyset, \d \cap K_2 \neq \emptyset\}).$$

\emph{ Werner's measure} on simple (self-avoiding) loops in the complex plane defined in \cite {Werner2008measure} is simply the image of $\mu_{\m C}^{loop}$ under that map that associates to a (Brownian) loop its outer boundary (i.e., 
 the boundary of the unbounded connected component of its complement). 
 As shown in \cite {Werner2008measure}, this measure turns out to be invariant under the map $z \mapsto 1/z$ 
 (and more generally under any conformal automorphism of the Riemann sphere), which in turn makes it possible to define this measure $\mu^{loop}_{W, M}$ in any Riemann surface $M$, in such a way that the above restriction and conformal invariance properties still hold for this family of measures on self-avoiding loops.  
 In fact, shown in \cite {Werner2008measure} that this is the unique (up to a multiplicative constant) such family of measures on self-avoiding loops satisfying both the restriction property and the conformal invariance properties.
For other characterizations (via a restriction-type formula, or as a measure on SLE loops) of Werner's measure and its properties (it is supported on SLE$_{8/3}$-type loops which have fractal dimension $4/3$), 
see \cite {Werner2008measure}.  
Since we will be discussing conformal restriction properties of the Loewner energy here, 
it is worth stressing here that the proofs in \cite {Werner2008measure} are building on the work of Lawler, Schramm and Werner \cite {LSW2003restriction} on chordal conformal restriction properties. 
   
One feature  that makes Werner's measure convenient to work with on Riemann surfaces is that 
if we consider two disjoint compact sets $K_1, K_2 \subset \m C$, then the total mass of loops that intersect both $K_1$ and $K_2$ is finite (see \cite{NacuWerner2011} Lemma~4):  
$$\mc W(K_1, K_2; \m C) := \mu^{loop}_{W,\m C} (\{\d; \d \cap K_1 \neq \emptyset, \d \cap K_2 \neq \emptyset\})<\infty.$$
This contrasts with the fact that the total mass (for the Brownian loop measure) of loops that intersect both $K_1$ and $K_2$ is infinite, due to the many very large 
Brownian loops that intersect both $K_1$ and $K_2$ (but the outer boundary of these large loops 
tends not to intersect $K_1$ or $K_2$, which explains why $\mc W (K_1, K_2 ; \m C) $ is finite). 
This feature was also one motivation for \cite {Werner2008measure} was for instance instrumental in the proof of the conformal invariance of simple Conformal Loop Ensembles on the Riemann sphere 
by Kemppainen and Werner in \cite{KemWer2016CLE}.

\section{Conformal restriction for simple chord}

We first recall the variation formula of the chordal Loewner energy under conformal restriction, first appeared in \cite{Dubedat2007commutation} and \cite{W1}: 
Let $K$ be a compact hull in $\m H$ at positive distance to $0$. 
The simply connected domain $H_K : = \m{H} \backslash K$ coincides with $\m{H}$ in the neighborhoods of $0$ and $\infty$. 
Let $\G$ be a simple chord contained in $H_K$ connecting $0$ to $\infty$ with finite Loewner energy in $(\m H, 0, \infty)$.

\begin{prop}[\cite{W1} Proposition~4.1]
\label{prop_energy_change}
The energy of $\G$ in $(\m{H},0 ,\infty)$ and in $(H_K, 0 , \infty)$ differ by 
\begin{align*}
 I_{H_K, 0 , \infty}(\G) - I_{\m H, 0, \infty}(\G) & = 3\log \abs{\psi'(0) \psi'(\infty)} + 12 \mc B(\G, K; \m{H}) \\
   & = 3\ln\abs{\psi'(0) \psi'(\infty)} + 12 \mc W(\G, K; \m{H}), 
\end{align*}
where $\psi$ is a conformal map $H_K \rar \m{H}$ fixing $0,\infty$.
\end{prop}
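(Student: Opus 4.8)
The plan is to run the chordal Loewner flow of $\G$ in $\m{H}$ and, in parallel, that of its image $\psi(\G)$, and to read off the energy difference from the way the two flows commute. Parametrize $\G$ by half-plane capacity in $\m{H}$; let $g_t$ be the Loewner maps and $W$ the driving function, so $I_{\m{H},0,\infty}(\G)=I(W)$. Since $\G\subset H_K$, the hull $g_t(K)$ stays at positive distance from $W(t)$ and from $\infty$ for all $t$; put $\phi_t:=\tilde g_{s(t)}\circ\psi\circ g_t^{-1}$, which maps $\m{H}\setminus g_t(K)$ conformally onto $\m{H}$, where $\tilde g_s$ is the Loewner flow of $\psi(\G)$ parametrized by its own capacity $s$, $s(t)$ the capacity of $\psi(\G[0,t])$, and $\tilde W$ its driving function. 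Then $\phi_t\circ g_t=\tilde g_{s(t)}\circ\psi$, $\phi_0=\psi$, and $I_{H_K,0,\infty}(\G)=I_{\m{H},0,\infty}(\psi(\G))=I(\tilde W)$ by definition. Differentiating $\phi_t\circ g_t=\tilde g_{s(t)}\circ\psi$ in $t$ gives the classical relations \cite{LSW2003restriction,Dubedat2007commutation}
\[
\partial_t\phi_t(z)=\frac{2\,\phi_t'(W(t))^2}{\phi_t(z)-\phi_t(W(t))}-\frac{2\,\phi_t'(z)}{z-W(t)},\qquad s'(t)=\phi_t'(W(t))^2,\qquad \tilde W(s(t))=\phi_t(W(t)).
\]

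What follows is a computation with these relations; write $a_t=\phi_t'(W(t))$, $b_t=\phi_t''(W(t))$, $c_t=\phi_t'''(W(t))$. Expanding the first relation to lowest order at $z=W(t)$ gives $(\partial_t\phi_t)(W(t))=-3b_t$, and to one order further $\frac{\mathrm d}{\mathrm d t}\log a_t=\frac{b_t}{a_t}W'(t)+\frac12(b_t/a_t)^2-\frac43(c_t/a_t)$. Differentiating $\tilde W(s(t))=\phi_t(W(t))$ in $t$ and using the first relation gives $\tilde W'(s(t))=W'(t)/a_t-3b_t/a_t^2$, so after the substitution $s=s(t)$ (with $\dd s=a_t^2\dd t$),
\begin{align*}
I(\tilde W)-I(W)&=\frac12\int_0^\infty\Bigl(W'(t)-\frac{3b_t}{a_t}\Bigr)^{\!2}\dd t-\frac12\int_0^\infty W'(t)^2\dd t\\
&=-3\int_0^\infty\frac{b_t}{a_t}\,W'(t)\dd t+\frac92\int_0^\infty\Bigl(\frac{b_t}{a_t}\Bigr)^{\!2}\dd t .
\end{align*}
Substituting $\frac{b_t}{a_t}W'(t)$ from the formula for $\frac{\mathrm d}{\mathrm d t}\log a_t$ into the first integral and recombining the resulting integrands with the second into the Schwarzian $\mathcal S\phi_t(W(t))=c_t/a_t-\tfrac32(b_t/a_t)^2$, the expression collapses to
\[
I(\tilde W)-I(W)=-3\bigl[\log a_t\bigr]_{t=0}^{t=\infty}-4\int_0^\infty\mathcal S\phi_t(W(t))\dd t .
\]

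It remains to recognize the two terms. For the boundary term, $a_0=\phi_0'(W(0))=\psi'(0)$ at once, while the limit of $a_t$ as $t\to\infty$ is controlled by the behaviour of $g_t$ near the receding tip — finiteness of the Loewner energy enters here, to ensure that $\G$ is a genuine chord from $0$ to $\infty$ — and yields $-3[\log a_t]_0^\infty=3\log\abs{\psi'(0)\psi'(\infty)}$. For the Schwarzian term, the essential input is the identity $\mc B(\G,K;\m{H})=-\tfrac13\int_0^\infty\mathcal S\phi_t(W(t))\dd t$, which expresses the mass of Brownian loops in $\m{H}$ meeting both $\G$ and $K$ through the Loewner flow: it follows from the conformal invariance of the Brownian loop measure, on pushing the infinitesimal increments forward by $g_t$ and using the asymptotics, as $\delta\to0$, of the mass of loops joining a capacity-$2\delta$ boundary bump at $W(t)$ to the hull $g_t(K)$. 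This is the computation of Lawler and Werner \cite{LW2004loupsoup}, which is also what lies behind the restriction formula for $\SLE_\kappa$. Plugging in yields the first asserted equality. The second is then elementary: $\G$ and $K$ are connected sets whose closures meet $\partial\m{H}$, so a Brownian loop contained in $\m{H}$ meets $\G$ (resp.\ $K$) if and only if its outer boundary does; since Werner's measure in $\m{H}$ is the image of the Brownian loop measure under the outer-boundary map \cite{Werner2008measure}, it follows that $\mc B(\G,K;\m{H})=\mc W(\G,K;\m{H})$.

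The step I expect to be the main obstacle is the identification of $\int_0^\infty\mathcal S\phi_t(W(t))\dd t$ with $-3\,\mc B(\G,K;\m{H})$: this is where the Brownian loop measure genuinely enters, and it needs either the loop-measure machinery of \cite{LW2004loupsoup} or a direct analysis of the local behaviour of $\mu_{\m{H}}^{loop}$ near a boundary point. Within the self-contained part, the delicate point is instead the control of $a_t$ as $t\to\infty$, needed to evaluate the boundary term. One can also reach the formula more conceptually, bypassing the Schwarzian bookkeeping: invoke the restriction property of $\SLE_\kappa$ in the form $\frac{\mathrm dP_\kappa^{H_K}}{\mathrm dP_\kappa^{\m{H}}}=\mathbf 1_{\G\subset H_K}\,\abs{\psi'(0)\psi'(\infty)}^{-\mathbf h(\kappa)}\exp\bigl(\tfrac{\mathbf c(\kappa)}2\mc B(\G,K;\m{H})\bigr)$ with $\mathbf h(\kappa)=\tfrac{6-\kappa}{2\kappa}$ and $\mathbf c(\kappa)=\tfrac{(3\kappa-8)(6-\kappa)}{2\kappa}$, and pass to the large-deviation limit $\kappa\to0$, using that $I$ is the rate function of the $\SLE_\kappa$ driving function and that $\kappa\,\mathbf h(\kappa)\to3$ and $\kappa\,\mathbf c(\kappa)\to-24$; the cost of that route is to justify tilting the large-deviation principle by these (not everywhere continuous) functionals.
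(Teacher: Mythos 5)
Your main argument is essentially identical to the paper's proof: the same commuting-flow setup $\phi_t=\tilde g_{s(t)}\circ\psi\circ g_t^{-1}$, the same expansion giving $(\partial_t\phi_t)(W_t)=-3\psi_t''(W_t)$ and the evolution of $\log\psi_t'(W_t)$, the same recombination of the energy difference into $-3[\log\psi_t'(W_t)]-4\int S\psi_t(W_t)\dd t$, the same appeal to \cite{LSW2003restriction,LW2004loupsoup} for $-4\int S\psi_t(W_t)\dd t=12\mc B(\G,K;\m H)$, and the same outer-boundary argument for $\mc B=\mc W$ (the paper just normalizes $\psi'(\infty)=1$ and works on $[0,T]$ before letting $T\to\infty$, which are cosmetic differences). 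The appended large-deviation route via the $\SLE_\kappa$ restriction property is not in the paper and is only a sketch, but the self-contained computation above it is correct and matches the paper's.
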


Notice that the derivatives of $\psi$ at boundary points $0$ and $\infty$ are well-defined by Schwarz reflection principle since $H_K$ coincides with $\m H$ in their neighborhood.
The first equality is the analogy of the conformal restriction property of SLE derived in \cite{LSW2003restriction}.
The second equality is due to the fact that $\m H$ is simply connected domain with non-polar boundary and both $K$ and $\G$ are attached to the boundary, so that the Brownian loop hits both $K$ and $\G$ if and only if the outer-boundary hits them.
For readers' convenience, we include the derivation of the first equality below. 

Without loss of generality, we choose the conformal map $ \psi : H_K \to H$ as in Proposition~\ref{prop_energy_change} such that $\psi'(\infty) = 1$. 
Let $K_t$ be the image of $K$ under the flow $g_t$ associated to $\Gamma$, $\tilde g_t$ the mapping-out function of $\psi (\G[0,t])$, and $\psi_t = \tilde g_t \circ \psi \circ g_t^{-1}: \m{H} \backslash K_t \rar \m{H}$ making the diagram commute (see figure~\ref{fig_diagram}).
It suffices to show that for $T < \infty$,
   \begin{equation}\label{eq_energy_change_finite}
   I_{\m H, 0, \infty}(\psi(\G[0,T])) - I_{\m H, 0, \infty}(\G[0,T]) = 3\ln\abs{\psi'(0)} + 12 \mc B (\G[0,T], K; \m{H}) - 3\ln\abs{\psi_T'(0)}
   \end{equation}
which implies Proposition~\ref{prop_energy_change} since the last term $3\ln\abs{\psi_T'(0)} \rar 0$ when $T \rar \infty$ and $ I_{\m H, 0, \infty}(\psi(\G[0,T])) =  I_{H_K, 0 , \infty}(\G[0,T])$.

 \begin{figure}[ht]
 \centering
 \includegraphics[width=0.7\textwidth]{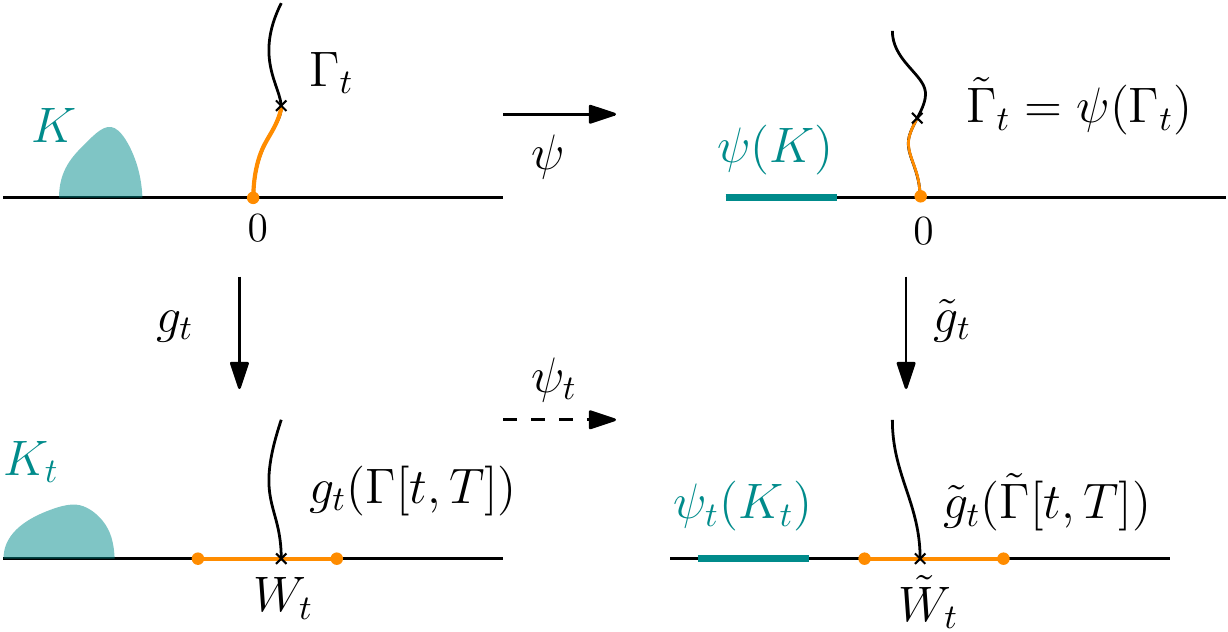}
 \caption{\label{fig_diagram} Maps in the proof of Proposition~\ref{prop_energy_change}, $\tilde W_t = \psi_t(W_t)$.}  
 \end{figure}

\begin{proof}
We write $W_t$ for $W(t)$ to shorten the notation.  We show first
 \begin{equation}
 I_{\m H, 0, \infty}(\psi (\G[0,T])) =  \frac{1}{2} \int_0^T \left[ \partial_t W_t - \frac{3 \psi_{t}''(0)}{\psi_{t}'(0)}\right]^2 \dd t. 
 \end{equation}
 Notice that $(\tilde \G_t : = \psi (\G_t))_{t \ge 0}$ is not capacity-parametrized. 
 We denote $a(t)$ the capacity of $\tilde \G[0,t]$, such that 
 $$\tilde g_t (z) = z + 2a(t)/z + o(1/z), \text{ as } z \to \infty.$$
By a scaling consideration, we have
 $\partial_t a(t) = [\psi_t' (W_t)]^2.$
 The family of conformal maps $\tilde g_t$ satisfies the Loewner differential equation: for $z \in \m H$,
 $$\partial_t \tilde g_t(z) = \partial_a \tilde g_t(z) \partial_t a(t) = \frac{2 [\psi_t' (W_t)]^2}{\tilde g_t (z) - \tilde W_t}.$$
 Now we compute the variation of $\tilde W$.
Since $\psi_t$ is defined by $\tilde g_t \circ \psi \circ g_t^{-1}$, we have
\begin{align} \label{eq_partial_t_psi}
\begin{split}
    \partial_t \psi_t (z) & = \partial_t \tilde g_t (\psi \circ g_t^{-1}(z)) + (\tilde g_t \circ \psi)' (g_t^{-1} (z) )\partial_t(g_t ^{-1}(z)) \\
    & = \frac{2[\psi_t' (W_t)]^2}{\tilde g_t \circ \psi \circ g_t^{-1}(z) - \tilde W_t} + (\tilde g_t \circ \psi)' (g_t^{-1} (z) )\frac{-2 (g_t^{-1})'(z)}{z-W_t}\\
    & =\frac{2[\psi_t' (W_t)]^2}{\psi_t (z) - \tilde W_t} - \frac{2 \psi_t' (z)}{z - W_t}.
\end{split}
\end{align}
Expanding $\psi_t$ in the neighborhood of $W_t$ (this is possible since $\psi_t$ is analytic by Schwarz reflection principle), we obtain
$$\partial_t \psi_t(z) =  - 3 \psi_t ''(W_t) + O(z- W_t).$$
Therefore
\begin{align*}
    \partial_t \tilde W_t & = \partial_t (\psi_t(W_t)) = (\partial_t \psi_t) (W_t) + \psi_t(W_t) \partial_t W_t\\
    & = \left(-3 \frac{\psi_t''(W_t)}{\psi_t'(W_t)} + \partial_t W_t\right) \psi_t'(W_t).
\end{align*}
Notice that since we assumed that $\G$ has finite Loewner energy in $\m H$, it implies that $W$ is in $W^{1,2}$. 
In particular, $W$ is absolutely continuous. It is not hard to see that it implies that $\tilde W$ is also absolutely continuous and the above computation of $\partial_t \tilde W_t$ makes sense.
The Loewner energy of $\psi(\G [0,T])$ is given by
$$\frac{1}{2}\int_0^{a(T)} \abs{\partial_a \tilde W(t(a))}^2 d a = \frac{1}{2}\int_0^{T} \abs{\partial_t \tilde W(t)}^2 (a'(t))^{-1} d t  = \frac{1}{2}\int_0^{T} \left[-3 \frac{\psi_t''(W_t)}{\psi_t'(W_t)} + \partial_t W_t\right]^2 d t $$
as we claimed.

Now we relate the right-hand side of \eqref{eq_energy_change_finite} with the mass of Brownian loop measure attached to both $\G$ and $K$.
Differentiating \eqref{eq_partial_t_psi} in $z$ and taking $z \to W_t$, we obtain
$$(\partial_t \psi_t')(W_t) = \frac{\psi_t''(W_t)^2}{2\psi_t'(W_t)} - \frac{4 \psi_t'''(W_t)}{3}.$$
We have also
$$\partial_t [\ln \psi_t'(W_t)] = \frac{1}{2}\left(\frac{\psi_t''(W_t)}{\psi_t'(W_t)}\right)^2 - \frac{4}{3}\frac{\psi_t'''(W_t)}{\psi_t'(W_t)} + \frac{\psi_t''(W_t)}{\psi_t'(W_t)} \partial_t W_t.$$
Therefore
\begin{align}
\label{eq_diff_integrand}
\begin{split}
     & \frac{1}{2} \left[ \partial_t W_t -3 \frac{\psi_t''(W_t)}{\psi_t'(W_t)} \right]^2 -  \frac{1}{2}(\partial_t W_t)^2 
   = \frac{9}{2}\left(\frac{\psi_t''(W_t)}{\psi_t'(W_t)}\right)^2 -3 \frac{\psi_t''(W_t)}{\psi_t'(W_t)}  \partial_t W_t  \\
    = & -3 \partial_t [\ln \psi_t'(W_t)] - 4 S\psi_t (W_t),
\end{split}
\end{align}
where $$S\psi_t =\frac{ \psi_t'''}{\psi_t'} - \frac{3}{2}\left(\frac{\psi_t''}{\psi_t'}\right)^2$$
is the Schwarzian derivative of $\psi_t$. 
Intergrating \eqref{eq_diff_integrand} over $[0,T]$, we obtain the identity
\eqref{eq_energy_change_finite} by identifying the Schwarzian derivative term using the path decomposition of the Brownian loop measure (see \cite{LSW2003restriction,LW2004loupsoup})
$$-4\int_0^T S\psi_t(W_t) dt = 12 \mc B(\G[0,T], K; \m H).$$
\end{proof}

From Proposition~\ref{prop_energy_change}, we deduce a more general relation of Loewner energy of the same chord in two domains $D$ and $D'$ which coincide in a neighborhood of both marked boundary points, by comparing to the Riemann surface $D \sqcup D'$ identified along the connected component of $D \cap D'$ containing $\G$:

\begin{cor} \label{cor_restr_chordal}
   Let $(D,a,b)$ and $(D',a,b)$ be two simply connected domains in $\m C$ coinciding in a neighborhood of $a$ and $b$, and $\G$ a simple curve in both $(D,a,b)$ and $(D',a,b)$. Then we have 
   \begin{align*}
   I_{D', a, b}(\G) - I_{D, a, b}(\G) =&  3\log \abs{\psi'(a) \psi'(b)} \\ 
   +&  12 \mc W( \G, D\backslash D'; D) - 12 \mc W(\G, D'\backslash D; D'),
   \end{align*}
   where $\psi :  D' \to D$ is a conformal map fixing $a$ and $b$.
\end{cor}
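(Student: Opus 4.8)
The plan is to factor the comparison between $D$ and $D'$ through the connected component $U$ of $D\cap D'$ that contains the curve, so that Proposition~\ref{prop_energy_change} applies to each of the two inclusions $U\subset D$ and $U\subset D'$, the two identities then being subtracted. First I would take $U$ to be the connected component of $D\cap D'$ containing $\Gamma\setminus\{a,b\}$. Because $D$ and $D'$ agree on a neighborhood of $a$ and on a neighborhood of $b$ while $\Gamma$ joins $a$ to $b$, the set $U$ is simply connected (a loop in $U$ has its inside in $D$ and in $D'$, hence in $D\cap D'$, hence in $U$), contains $\Gamma$, and coincides with both $D$ and $D'$ near $a$ and $b$. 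Let $\varphi_D\colon U\to D$ and $\varphi_{D'}\colon U\to D'$ be conformal maps fixing $a$ and $b$; they extend analytically past $a$ and $b$ by Schwarz reflection, so $\abs{\varphi_D'(a)\varphi_D'(b)}$ and $\abs{\varphi_{D'}'(a)\varphi_{D'}'(b)}$ are well defined, just as in Proposition~\ref{prop_energy_change}.

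Uniformizing $D$ onto $\m H$ by a conformal map sending $a\mapsto 0$ and $b\mapsto\infty$ turns $U$ into $\m H\setminus K$ for a bounded hull $K$ at positive distance from $0$ (it stays away from $\infty$ precisely because $U$ agrees with $D$ near $b$), and turns $\Gamma$ into a finite-energy chord. Proposition~\ref{prop_energy_change} in its $\mc W$-form, together with the conformal invariance of Werner's measure, then gives
\begin{equation*}
I_{U,a,b}(\Gamma)-I_{D,a,b}(\Gamma)=3\log\abs{\varphi_D'(a)\varphi_D'(b)}+12\,\mc W(\Gamma,D\setminus U;D),
\end{equation*}
and the same identity with $D$ replaced throughout by $D'$. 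Subtracting the two makes $I_{U,a,b}(\Gamma)$ cancel, and since $\psi:=\varphi_D\circ\varphi_{D'}^{-1}\colon D'\to D$ fixes $a$ and $b$, the chain rule at these (now regular) boundary points gives $\psi'(a)\psi'(b)=\varphi_D'(a)\varphi_D'(b)/(\varphi_{D'}'(a)\varphi_{D'}'(b))$, so the two logarithms combine into $3\log\abs{\psi'(a)\psi'(b)}$. One then has the asserted formula except with $\mc W(\Gamma,D\setminus U;D)$ and $\mc W(\Gamma,D'\setminus U;D')$ in place of $\mc W(\Gamma,D\setminus D';D)$ and $\mc W(\Gamma,D'\setminus D;D')$.

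The last step --- which I expect to be the only one carrying any content --- is the identification $\mc W(\Gamma,D\setminus U;D)=\mc W(\Gamma,D\setminus D';D)$ and its mirror image. Since $D\setminus D'\subseteq D\setminus U$, the loops in $D$ meeting both $\Gamma$ and $D\setminus D'$ form a subfamily of those meeting both $\Gamma$ and $D\setminus U$. For the reverse inclusion I would argue topologically: if a loop $\delta\subset D$ meets $\Gamma\subset U$ and also some point $p\in D\setminus U$, then either $p\notin D'$, so $\delta$ already meets $D\setminus D'$; or $p$ lies in a component of the open set $D\cap D'$ other than $U$, and then the connected set $\delta$ meets two distinct components of $D\cap D'$, so it cannot be contained in $D\cap D'$ and therefore meets $D\setminus(D\cap D')=D\setminus D'$. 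Hence the two loop families coincide and have the same mass under Werner's measure; inserting this into the displayed identity finishes the proof. Beyond this, the only points to verify are bookkeeping ones --- simple connectivity of $U$, that the image hull meets the hypotheses of Proposition~\ref{prop_energy_change}, independence of $\abs{\psi'(a)\psi'(b)}$ from the normalization of $\psi$ --- none of which should cause difficulty.
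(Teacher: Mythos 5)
Your proposal is correct and follows essentially the same route the paper indicates: the paper derives the corollary precisely ``by comparing to \ldots the connected component of $D\cap D'$ containing $\Gamma$,'' i.e.\ applying Proposition~\ref{prop_energy_change} to the inclusions $U\subset D$ and $U\subset D'$ and subtracting, which is what you do. Your extra step identifying $\mc W(\Gamma,D\setminus U;D)$ with $\mc W(\Gamma,D\setminus D';D)$ via connectedness of the loops is a correct filling-in of a detail the paper leaves implicit.
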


\section{Conformal restriction for simple loop}

We prove in this section the following conformal restriction formula for the loop energy. The loop version has the advantage compared to the chordal case of no longer having the boundary terms.

\begin{thm}\label{thm_restr_loop} If $\eta$ is a Jordan curve with finite energy and $\G = f(\eta)$, where $f: A \to \tilde A$ is conformal on a neighborhood $A$ of $\eta$, then
  $$ I^L(\Gamma) - I^L(\eta) = 12 \mc W ( \eta,  A^c ;\m C) - 12 \mc W ( \Gamma, \tilde A^c ; \m C).  $$
\end{thm}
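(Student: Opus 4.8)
The plan is to reduce the loop statement to the chordal conformal restriction formula of Corollary~\ref{cor_restr_chordal} via the limiting procedure that defines $I^L$. First I would fix the marked point and consider the rooted energies: choose a root $\eta(0)$ on $\eta$ and set $\G(0) = f(\eta(0))$. For small $\vare > 0$, the arc $\eta[\vare,1]$ is a chord from $\eta(\vare)$ to $\eta(0)$ in the simply connected domain $\Chat \backslash \eta[0,\vare]$, and $f(\eta[\vare,1]) = \G[\vare',1]$ is the corresponding chord in $\Chat \backslash \G[0,\vare']$, where $\vare'$ is chosen so that $f(\eta(\vare)) = \G(\vare')$. The two domains $\Chat \backslash \eta[0,\vare]$ and the pullback $f^{-1}(\Chat \backslash \G[0,\vare'])$ (intersected appropriately with $A$) agree near the two marked prime ends $\eta(\vare)$ and $\eta(0)$ once $\vare$ is small enough that the relevant initial segments of $\eta$ lie in $A$ and are mapped by $f$. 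So I can apply Corollary~\ref{cor_restr_chordal} (transported to $\Chat$ by a Möbius map, using that all quantities there are conformally invariant on the sphere) to the chord $\eta[\vare,1]$, comparing its energy in the two domains.

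The key step is then to identify the limits of the three resulting terms as $\vare \to 0$. The left-hand side gives $I^L(\G,\G(0)) - I^L(\eta,\eta(0))$ by definition, and by the root-independence of the loop energy established in \cite{RW} (cited via Theorem~\ref{thm_energy_det}, whose right-hand side is manifestly root-independent) this equals $I^L(\G) - I^L(\eta)$. The derivative prefactor $3\log|\psi'(\eta(\vare))\psi'(\eta(0))|$ should tend to $0$: at the common point $\eta(0)$ the map $\psi$ relating the two domains is, in the limit, the identity near that prime end (the two domains converge to the same domain $\Chat \backslash \{\eta(0)\}$ locally), and similarly at $\eta(\vare)$; this is exactly the phenomenon already exploited in the excerpt when the term $3\ln|\psi_T'(0)| \to 0$. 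The main work is controlling the two Brownian-loop (equivalently Werner) mass terms: $\mc W(\eta[\vare,1], (\Chat\backslash\eta[0,\vare]) \setminus f^{-1}(\cdots); \cdot)$ must converge to $\mc W(\eta, A^c; \m C)$, and the $\G$-side term to $\mc W(\G,\tilde A^c;\m C)$. Here I would use the restriction property of Werner's measure to rewrite the masses as intersection masses of self-avoiding loops with fixed compact-ish sets, then invoke monotone/dominated convergence: as $\vare \to 0$ the curve $\eta[\vare,1]$ increases to $\eta$, the removed set increases to $A^c$ (or its relevant compact part), and the finiteness input $\mc W(K_1,K_2;\m C) < \infty$ from \cite{NacuWerner2011} Lemma~4 — applied with $K_1$ a compact neighborhood of $\eta$ and $K_2$ a compact piece of $A^c$ — provides the dominating mass.

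The hard part will be the bookkeeping near the root: making precise the claim that the comparison map $\psi$ between $\Chat\backslash\eta[0,\vare]$ and the $f$-pullback of $\Chat\backslash\G[0,\vare']$ has boundary derivatives at the two marked points tending to $1$, since this requires a uniform (Carathéodory-type) control on how these slit domains degenerate and how $f$ interacts with the shrinking slits. I would handle this by noting that $f$ is conformal and nondegenerate on a fixed neighborhood of $\eta(0)$, so for small $\vare$ the distortion introduced by replacing the slit $\G[0,\vare']$ by $f(\eta[0,\vare])$ is negligible; more carefully, one can absorb the discrepancy into a further conformal map that converges to the identity in $C^1$ near the marked points, whence its derivatives there converge to $1$. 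A secondary subtlety is that Corollary~\ref{cor_restr_chordal} as stated is for domains in $\m C$ coinciding near $a,b$ with $\G$ a simple curve in both — one must check the two slit domains literally satisfy this (they do coincide near $\eta(\vare)$ and $\eta(0)$ for $\vare$ small, after pulling back by $f$), and that the curve $\eta[\vare,1]$ genuinely lies in both, which follows from $\eta \subset A$ near its initial segment and $f$ being a bijection $A \to \tilde A$. Once these points are nailed down, passing to the limit termwise yields the stated identity.
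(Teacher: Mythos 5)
Your overall strategy --- reduce to the chordal restriction formula of Corollary~\ref{cor_restr_chordal} and pass to the limit as the chord closes up into the loop --- is indeed the strategy of the paper's proof, but there are two places where the proposal as written would break down. The first is topological and is the key missing idea. The intermediate domain on which you want to identify the two chordal energies via $f$, namely $A\backslash \eta[0,\vare]$ (your ``pullback of $\Chat\backslash\G[0,\vare']$ intersected with $A$''), is \emph{not simply connected}: $A$ is an annular neighborhood of $\eta$, and removing a short arc of $\eta$ from it leaves a triply connected domain. So Corollary~\ref{cor_restr_chordal} does not apply to this pair of domains, and the chordal Loewner energy of $\eta[\vare,1]$ in $A\backslash\eta[0,\vare]$ is not even defined in this framework. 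Relatedly, $f$ maps this domain onto $\tilde A\backslash\G[0,\vare']$, not onto $\Chat\backslash\G[0,\vare']$, so a single application of the corollary cannot produce the $\G$-side energy; one needs an application on each side with the middle energies matched by conformal invariance. The paper's fix is the ``stick'' construction: attach a stick $T$ to the arc $(ab)_\eta$ so that $K=A^c\cup T$ makes $D\backslash K$ simply connected and contained in $A$, apply the chordal corollary once on the $\eta$-side and once on the $\G$-side with $\tilde K=\tilde A^c\cup f(T)$, identify the two middle energies via $f$, and then check that all contributions of the arbitrary stick cancel (the Werner-mass part by conformal invariance of Werner's measure restricted to $A$, the derivative part by rewriting the ratio of $\psi'$-products as a ratio of Poisson excursion kernels equal to $\abs{f'(a)f'(b)/(g'(a)g'(b))}$).

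The second gap is your treatment of the derivative prefactor. The claim that $3\log\abs{\psi'(\eta(\vare))\psi'(\eta(0))}$ tends to $0$ because ``the two domains converge to the same domain locally'' is not correct: $\Chat\backslash\eta[0,\vare]$ and the restricted domain converge to $\Chat\backslash\{\eta(0)\}$ and to a proper subdomain of it respectively, so the comparison map does not converge to the identity and its boundary derivatives at the marked points have no reason to tend to $1$ individually. (The analogy with $3\ln\abs{\psi_T'(0)}\to0$ in the chordal proof is misleading; there the hull genuinely escapes from the marked point.) What does tend to $1$ is the \emph{ratio} of the derivative products coming from the $\eta$-side and $\G$-side applications of the corollary, which after the excursion-kernel identification equals $\abs{f'(a)f'(b)}/\abs{g'(a)g'(b)}$; proving that this ratio tends to $1$ as $b\to a$ is exactly the content of Lemma~\ref{lem_anal} and requires a distortion argument (the images $\psi(\m D^c)$, $\varphi(\m D^c)$ shrink as $b\to a$), not a termwise vanishing. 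Your handling of the Werner-mass terms by monotonicity and the finiteness bound from \cite{NacuWerner2011} is fine and matches what the paper does implicitly.
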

 The loop terms are finite since we can replace $A^c$ by its boundary. 
Both $\partial A^c$  and $\eta$ are compact and are at positive distance.

\begin{rk} 
The right-hand side of the above identity remains the same if we replace $A$ by a subset $B$ such that $\eta \subset B \subset A$.
In fact, since $A \backslash B$ is at positive distance with $\eta$, we have  $\mc W (\eta, A \backslash B; A) < \infty$. 
 We then decompose the loop measure 
$$\mc W (\eta, B^c; \m C) = \mc W (\eta, A^c; \m C) + \mc W (\eta, A \backslash B; A).$$
The conformal invariance of Werner's measure provides that
$$\mc W (\eta, A \backslash B; A) = \mc W (f(\eta),  f(A \backslash B); f(A)) = \mc W (\G, \tilde A \backslash f(B); \tilde A).$$
Hence
$$I^L(\G) - I^L(\eta) = 12 \mc W (\eta, B^c; \m C) - 12 \mc W (\G, f(B)^c; \m C) $$
given that the formula holds for $A$.
\end{rk}

Now we can prove Theorem~\ref{thm_restr_loop}:
 \begin{proof}
 From the remark, we assume that $A$ is an annulus without loss of generality.
 Since the loop energy is a limit of chordal Loewner energies, the idea is to bring back to the conformal restriction in the chordal framework.
 
 More precisely, let $a, b$ be two points on the curve $\eta$, $\tilde a = f(a)$ and $\tilde b = f(b)$.  Let $D: = \Chat \backslash (ab)_{\eta}$ and $\tilde D := \Chat \backslash (\tilde a \tilde b)_{\G}$.
 We take a ``stick'' $T$ attached to the arc $(ab)_{\eta}$ of the curve $\eta$, such that $ D \backslash K$ is simply connected, where $K = A^c \cup T$ is the union of two ``lollipops''. Define $\tilde T := f(T) \subset \tilde A$ the conformal image of $T$ and similarly $\tilde K = \tilde A^c \cup \tilde T$ (see Figure~\ref{fig_restr_loop}). 
  
 \begin{figure}[ht]
 \centering
 \includegraphics[width=0.8\textwidth]{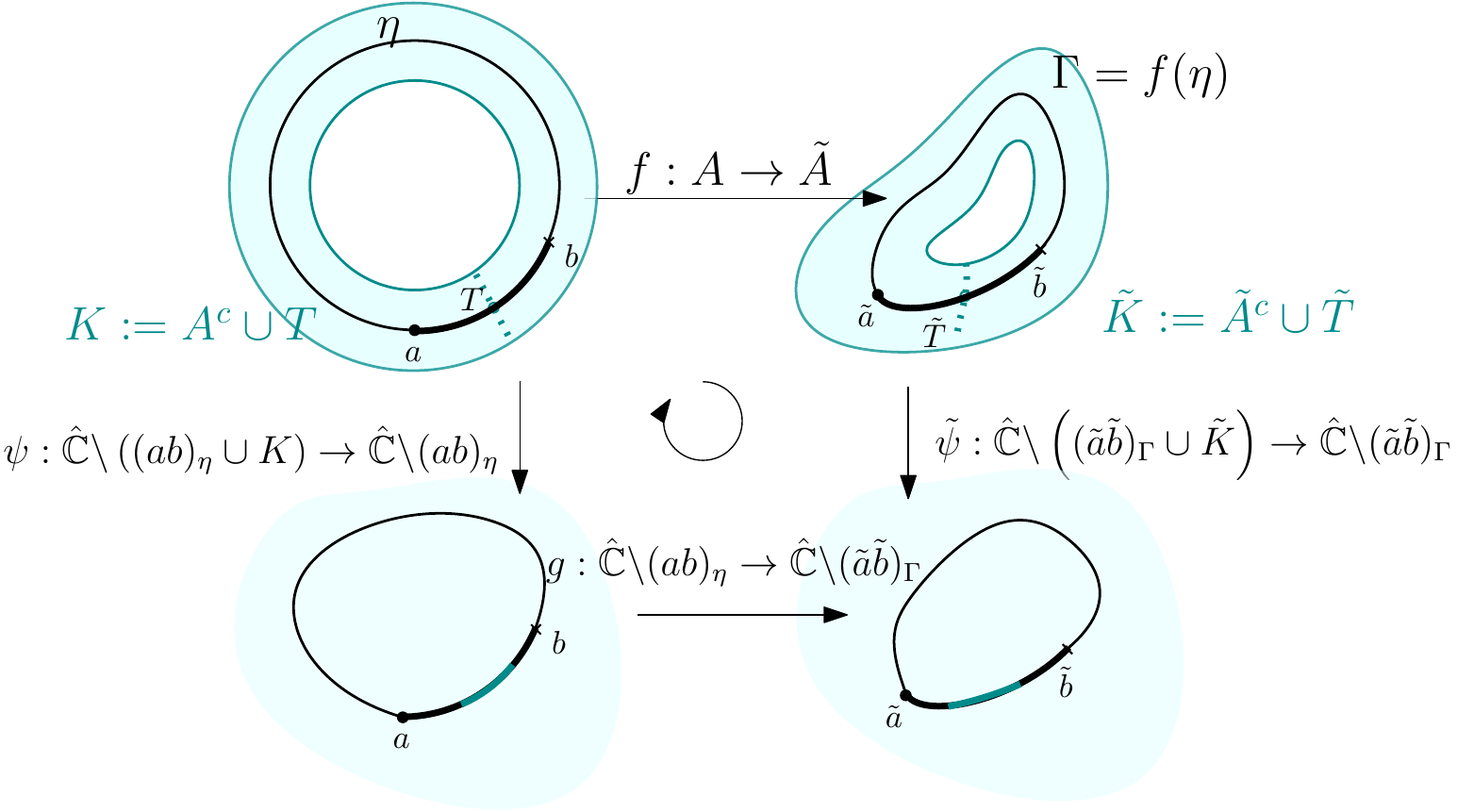}
 \caption{\label{fig_restr_loop} Maps in the proof of Theorem~\ref{thm_restr_loop}.}  
 \end{figure}
  
Now we compare the chordal Loewner energy of $(ba)_{\eta}$ (the complement of $(ab)_{\eta}$ in the curve $\eta$) in $D$ and $(\tilde b \tilde a)_{\G}$ in $\tilde D$.
Notice that $D$ and $D \backslash K$ coincide in a neighborhood of both $a$ and $b$. Let $\psi$ and $\tilde \psi$ be a choice of conformal maps as in Figure~\ref{fig_restr_loop}, and $g$ factorizes the diagram.
Applying Corollary~\ref{cor_restr_chordal} to $(ba)_{\eta}$ in $D$, we have
\begin{equation}\label{eq_var_1}
I_{D \backslash K} ((ba)_{\eta}) - I_{D} ((ba)_{\eta}) = 3 \log \abs{\psi'(a) \psi'(b)} + 12 \mc W ((ba)_{\eta}, K; D),
\end{equation}
and similarly,
\begin{equation}\label{eq_var_2}
I_{\tilde D \backslash \tilde K} ((\tilde b \tilde a)_{\G}) - I_{\tilde D} ((\tilde b \tilde a)_{\G}) = 3 \log \abs{\tilde \psi'(\tilde a) \tilde \psi'(\tilde b)} + 12 \mc W ((\tilde b \tilde a)_{\G}, \tilde K; \tilde D).
\end{equation}

 From the construction, 
 $$I_{D\backslash K} ((ba)_{\eta})  = I_{D} (\psi [(ba)_{\eta}]) = I_{\tilde D} (g \circ \psi ( [(ba)_{\eta}]) = I_{\tilde D} (\tilde \psi [(\tilde b \tilde a)_{\G}]) =    I_{\tilde D \backslash \tilde K} ((\tilde b \tilde a)_{\G}), $$
 where the second equality follows from the conformal invariance of the chordal Loewner energy.
 
We write $H(a,b; D)$ for the Poisson excursion kernel between two boundary points $a,b$ of the domain $D$ (relatively to the local analytic coordinates). Choosing the same analytic coordinates near $a,b$ in the above four pictures, then we have 
 \begin{align*}
 \frac{\psi'(a) \psi' (b)}{\tilde \psi' (\tilde a) \tilde \psi'(\tilde b)} &= \frac{H (a,b; A\backslash (ab)_{\eta})}{H (a,b; D)} \frac{H (\tilde a,\tilde b; \tilde D)}{H (\tilde a,\tilde b; \tilde A \backslash (\tilde a\tilde b)_{\G})} \\
 &= \frac{H (a,b; A\backslash (ab)_{\eta})}{H (\tilde a,\tilde b; \tilde A \backslash (\tilde a\tilde b)_{\G})} \frac{H (\tilde a,\tilde b; \tilde D)}{H (a,b; D)} = \frac{f'(a) f'(b)}{g'(a) g'(b)}
 \end{align*}
 which no longer depends on the stick $T$ chosen.
 
 Also notice that we can decompose the loop measure term as in the remark:
 $$\mc W((ba)_{\eta}, K; D) = \mc W ((ba)_{\eta}, A^c; D) + \mc W ((ba)_{\eta}, T; A \backslash (ba)_{\eta}).$$
 Since the Werner's measure is conformally invariant, we have in particular
 $$\mc W ((ba)_{\eta}, T; A \backslash (ba)_{\eta}) = \mc W ((\tilde b\tilde a)_{\G}, \tilde T; \tilde A \backslash (\tilde b\tilde a)_{\G}).$$
 Taking the difference \eqref{eq_var_1} - \eqref{eq_var_2} combining the above observations, we get
 \begin{align}\label{eq_diff_ab}
 \begin{split}
   & I_{\tilde D} ((\tilde b \tilde a)_{\G})- I_{D} ((ba)_{\eta}) \\
 = & 3 \log \abs{\frac{f'(a)f'(b)}{g'(a) g'(b)}} + 12 \mc W ((ba)_{\eta}, A^c; D) - 12 \mc W ((\tilde b \tilde a)_{\G}, \tilde A^c; \tilde D). 
 \end{split}
 \end{align}
 We conclude the proof by taking $b \to a$ on $\eta$, using the definition of loop energy
 $$I_{D} ((ba)_{\eta}) \xrightarrow[]{b \to a} I^L(\eta, a) = I^L(\eta) $$
 and the fact that
 $$\mc W ((ba)_{\eta}, A^c; D) \xrightarrow[]{b \to a} \mc W(\eta, A^c; \m C).$$
 The log-derivative terms goes $0$ thanks to the following lemma and concludes the proof of the theorem. 
\end{proof}

\begin{lem}\label{lem_anal}
   With the same notations as in the proof of Theorem~\ref{thm_restr_loop} (see Figure~\ref{fig_restr_loop}), 
   $$\lim_{b \to a} \abs{\frac{f'(a)f'(b)}{g'(a) g'(b)}} = 1.$$
\end{lem}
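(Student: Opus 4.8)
I will show that both $f'(a)/g'(a)$ and $f'(b)/g'(b)$ individually tend to a common nonzero limit as $b\to a$, in fact that the ratio $\bigl(f'(a)/g'(a)\bigr)\big/\bigl(f'(b)/g'(b)\bigr)\to 1$. The key observation is that $g$ is \emph{not} a fixed map: as $b\to a$ on $\eta$, the domains $D=\Chat\setminus(ab)_\eta$ and $\tilde D=\Chat\setminus(\tilde a\tilde b)_\G$ exhaust $\Chat$ minus a point, and the removed slits $(ab)_\eta$, $(\tilde a\tilde b)_\G$ shrink to the points $a$, $\tilde a=f(a)$. The map $g:\tilde D\to D$ is (up to the normalization fixing $a,b$ and the chosen local coordinates) the composition $\psi\circ$(something)$\circ\tilde\psi^{-1}$ making the diagram of Figure~\ref{fig_restr_loop} commute; concretely $g$ is characterized as the conformal map $\tilde D\to D$ fixing $a$ and $b$ (with the appropriate normalization of derivatives dictated by the lollipop construction, which is why the displayed ratio of excursion kernels collapsed to $f'(a)f'(b)/g'(a)g'(b)$).

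**Main steps.** First I would record, via the excursion-kernel identity already derived in the proof, that
$$\frac{f'(a)f'(b)}{g'(a)g'(b)}=\frac{\psi'(a)\psi'(b)}{\tilde\psi'(\tilde a)\tilde\psi'(\tilde b)}=\frac{H(a,b;A\setminus(ab)_\eta)}{H(a,b;D)}\cdot\frac{H(\tilde a,\tilde b;\tilde D)}{H(\tilde a,\tilde b;\tilde A\setminus(\tilde a\tilde b)_\G)}.$$
Second, I would analyze each excursion-kernel ratio as $b\to a$. The denominators $H(a,b;D)$ and $H(\tilde a,\tilde b;\tilde D)$ both blow up as $b\to a$ (the excursion kernel between nearby boundary points of a fixed-type slit domain diverges like $|a-b|^{-2}$ in a local coordinate), and so do the numerators; the point is that the blow-up rate depends only on the local geometry of the domain boundary near $a$. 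Since $D$ near $b$ looks (after applying the local coordinate) like $\Chat$ slit along an arc emanating from $a$, while $A\setminus(ab)_\eta$ near $b$ looks like $A$ slit along the same arc, and $A$ is just a neighborhood of $\eta$ that agrees with $\Chat$ to leading order near $a$, the two numerators-over-denominators each tend to a limit governed by the conformal map from one slit configuration to the other fixing $a$. The cleanest route: use that $\eta$ has \emph{finite Loewner energy}, hence is a quasicircle, hence locally asymptotically a straight line near $a$ up to the relevant order; this lets me compute the limiting ratio explicitly using half-plane slit domains.

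**The real argument.** Rather than estimating each kernel, the slick approach is to exploit conformal invariance of the \emph{whole picture}. Precompose everything with a Möbius map sending $a\mapsto\infty$. Then $D$ becomes $\m C$ minus a bounded arc $\g$ with one endpoint going to $\infty$, i.e.\ $D$ becomes $H_{K'}$-type; $\psi$ becomes a map that is asymptotically the identity near $\infty$ and near the other endpoint $b$. Since $f$ is conformal on a neighborhood of $a=\infty$, after this Möbius change $f$ fixes $\infty$ with $f'(\infty)$ finite and nonzero, and $g$ likewise. As $b\to a$ (i.e.\ the finite endpoint of the slit recedes to $\infty$ too), the slits $\g$ and $f(\g)$ escape to $\infty$, so $D\uparrow\m C$ and $\tilde D\uparrow\m C$, whence $\psi\to\mathrm{id}$, $\tilde\psi\to\mathrm{id}$ locally uniformly on $\m C$ (Carathéodory kernel convergence), and therefore $g=\tilde\psi\circ(\ldots)\circ\psi^{-1}\to f$ locally uniformly. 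By Weierstrass, $g'\to f'$ locally uniformly, so $g'(a)\to f'(a)$ and $g'(b)\to f'(b)$ with $f'(b)\to f'(a)\ne 0$; hence $f'(a)f'(b)/g'(a)g'(b)\to 1$ and the absolute value too.

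**Main obstacle.** The delicate point is making the Carathéodory kernel convergence statement legitimate \emph{with the derivative normalizations fixed at the two boundary points $a,b$ rather than at an interior point}. Kernel convergence gives interior locally-uniform convergence of the uniformizing maps only up to an affine normalization, and here the normalization is pinned down by requiring $\psi,\tilde\psi$ to fix $a$ and $b$ with prescribed local behavior — precisely the data that the excursion-kernel computation encodes. I expect to handle this by transferring to the half-plane coordinate at $a=\infty$ where "fixing $\infty$ with unit derivative" is the hydrodynamic normalization, so that $\psi$ and $\tilde\psi$ are genuine mapping-out functions of shrinking hulls and their convergence to the identity (together with convergence of the derivative at the \emph{other}, receding, boundary point $b$) follows from standard Loewner-theory estimates on $\mathrm{cap}(\g)\to 0$; the finite-energy hypothesis on $\eta$ guarantees the arcs $(ab)_\eta$ are nice enough (quasiarcs) that their half-plane capacities, seen from $\infty$, go to zero as $b\to a$.
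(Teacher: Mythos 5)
Your reduction to the double ratio of excursion kernels is correct and is exactly the paper's starting point, but the ``real argument'' that follows has a genuine gap. In the notation of the proof of Theorem~\ref{thm_restr_loop}, $\psi$ and $\tilde\psi$ are the conformal maps removing the \emph{fixed} lollipops $K=A^c\cup T$ and $\tilde K=\tilde A^c\cup\tilde T$ from the slit spheres $D$ and $\tilde D$; these compact sets do not shrink as $b\to a$, so $\psi$ and $\tilde\psi$ do not converge to the identity, and Carath\'eodory kernel convergence gives nothing here. The conclusion ``$g\to f$ locally uniformly'' cannot be repaired either: $g$ is defined on $D\uparrow\Chat\setminus\{a\}$, so any locally uniform limit of $g$ would be injective on $\Chat\setminus\{a\}$, hence a M\"obius transformation, which $f$ (defined only on the annulus $A$) is not in general. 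The lemma only asserts that $g$ matches $f$ \emph{to first order at the two colliding boundary points} $a$ and $b$, and that is precisely the information that interior locally uniform convergence does not provide --- you flag this yourself as the ``main obstacle'', but your proposed fix (viewing $\psi,\tilde\psi$ as mapping-out functions of shrinking hulls in a hydrodynamic normalization) rests on the same misidentification of which hulls $\psi$ and $\tilde\psi$ remove.

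The idea sketched in your ``main steps'' paragraph is the right one: each ratio $H(a,b;A\setminus(ab)_\eta)/H(a,b;D)$ tends to $1$ because deleting the fixed set $A^c$, which stays at positive distance from $a$, does not affect the blow-up of the excursion kernel between the colliding tips. The paper makes this precise with no regularity input on $\eta$ (the quasicircle/finite-energy property you invoke is not needed for this lemma): after normalizing $A=\m D$, $a=0$, it uniformizes $\Chat\setminus(ab)_\eta$ and $\Chat\setminus(\tilde a\tilde b)_\G$ onto $\m D$ sending $\infty\mapsto 0$ and $a\mapsto 1$; under these maps the fixed complements have images of vanishing diameter, so the transported versions $F$ and $G$ of $f$ and $g$ satisfy $F'(1)F'(\psi(b))/G'(1)G'(\psi(b))\to 1$, which is the desired statement. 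Equivalently, one can argue probabilistically that the Brownian excursion from $a$ to $b$ in $D$ avoids $A^c$ with probability tending to $1$ as $b\to a$. Either of these should replace your Carath\'eodory step.
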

\begin{proof}
Without loss of generality, we may assume that $A = \m D$ the unit disk, $a = 0$, $\tilde a = f(a) = 0$.
%and $b$ is close enough to $a$ such that $(ab)_{\eta} \subset \abs{b}\cdot \m D \subset \m D$.
 \begin{figure}[ht]
 \centering
 \includegraphics[width=0.9\textwidth]{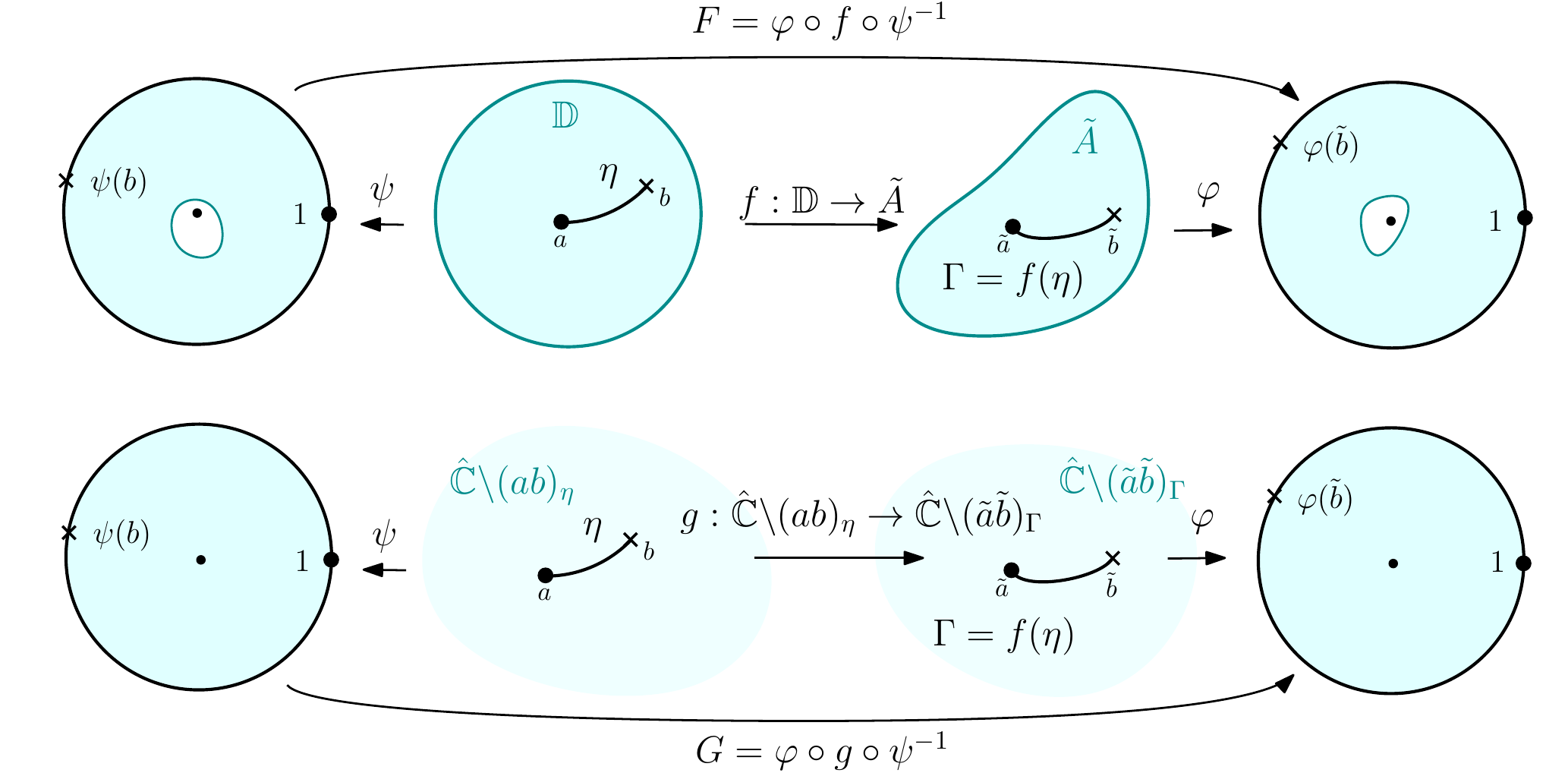}
 \caption{\label{fig_lemma} Maps in the proof of Lemma~\ref{lem_anal}.}  
 \end{figure}
 Let $\psi$ be the conformal map $\Chat \backslash (ab)_{\eta} \to \m D$, such that $\psi (\infty) = 0$, $\psi (0) = 1$. 
 Similarly let $\varphi$ be the conformal map $\Chat \backslash (\tilde a\tilde b)_{\G} \to \m D$, such that $\varphi (\infty) = 0$, $\varphi (0) = 1$.
 Define $F =  \varphi \circ f \circ \psi^{-1}$ and $G=  \varphi \circ f \circ \psi^{-1}$ between the blue-shaded area in Figure~\ref{fig_lemma}.
 
 It is not hard to see that 
 %$\psi (b)$ and $F(\psi(b)) = \varphi(\tilde b)$ converges to $-1$ as $b \to a$ since the ratio of the harmonic measure of both sides of $(ab)_{\eta}$ or $(\tilde a \tilde b)_{\G}$ seen from $\infty$ converges to~$1$. Moreover, 
 the diameter of $\psi (\m D^c)$ and $\varphi (\m D^c)$ shrinks to $0$ as  $b \to a$. 
 Therefore
 $$\frac{F'(1) F'(\psi(b))}{G'(1) G'(\psi(b))} \xrightarrow[]{b\to a} 1.$$
 On the other hand,
 $$\frac{F'(1) F'(\psi(b))}{G'(1) G'(\psi(b))} = \frac{H (a,b; A\backslash (ab)_{\eta})}{H (\tilde a,\tilde b; \tilde A \backslash (\tilde a\tilde b)_{\G})} \frac{H (\tilde a,\tilde b; \tilde D)}{H (a,b; D)} = \frac{f'(a) f'(b)}{g'(a) g'(b)}$$
 which concludes the proof.
\end{proof}

By taking $\eta = S^1$,
we deduce immediately the interpretation of the Loewner energy of an analytic Jordan curve: 

\begin{cor} \label{cor_energy_anal} If $\G = f(S^1)$ is an analytic curve, then 
$$I^L(\G) = 12 \mc W ( S^1,  A^c ;\m C) - 12 \mc W ( \Gamma, \tilde A^c ; \m C),  $$
where $f : A \to \tilde A$ maps conformally a neighborhood $A$ of $S^1$ to a neighborhood $\tilde A$ of $\G$.
\end{cor}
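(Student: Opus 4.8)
The plan is to derive Corollary~\ref{cor_energy_anal} as a direct specialization of Theorem~\ref{thm_restr_loop}. The only nontrivial input is that the standard unit circle $S^1$ has vanishing Loewner loop energy, i.e.\ $I^L(S^1) = 0$; once this is known, setting $\eta = S^1$ (which is obviously a finite-energy Jordan curve) and $\G = f(S^1)$ in Theorem~\ref{thm_restr_loop} immediately yields
$$I^L(\G) - 0 = 12\,\mc W(S^1, A^c; \m C) - 12\,\mc W(\G, \tilde A^c; \m C),$$
which is exactly the claimed formula.

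First I would recall why $I^L(S^1) = 0$. By definition, $I^L(S^1, 1) = \lim_{\vare \to 0} I_{\Chat \backslash S^1[0,\vare],\, S^1(\vare),\, S^1(1)}(S^1[\vare,1])$ for the circle parametrized by, say, $\G(t) = e^{2\pi i t}$. For each $\vare$, the arc $S^1[\vare, 1]$ is a circular subarc, hence a smooth chord in the complement of the small arc $S^1[0,\vare]$; uniformizing to $(\m H, 0, \infty)$ by a Möbius transformation sends this circular arc to a straight vertical half-line (or a half-line through $0$ and $\infty$), whose driving function is identically $0$, so each chordal energy in the limit is $0$, and thus $I^L(S^1) = 0$. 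Alternatively, and perhaps more cleanly for the write-up, one can invoke Theorem~\ref{thm_energy_det}\eqref{item_energy_determinant}: the identity there reads $I^L(\G) = 12\mc H(\G, g) - 12\mc H(S^1, g)$, so plugging in $\G = S^1$ gives $I^L(S^1) = 0$ tautologically. Either justification is a one-liner; I would use whichever is already most readily available to the reader at that point in the paper (the determinant identity is stated in the introduction, so citing it is cleanest).

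Next I would just apply Theorem~\ref{thm_restr_loop} verbatim. The hypotheses require $f : A \to \tilde A$ conformal on a neighborhood $A$ of $\eta = S^1$ with $\G = f(\eta)$; these are precisely the hypotheses assumed in the statement of Corollary~\ref{cor_energy_anal}, where $A$ is a neighborhood of $S^1$ and $\tilde A = f(A)$ a neighborhood of $\G$. Substituting $\eta = S^1$, $I^L(\eta) = 0$, and reading off the right-hand side gives the result. The loop-measure terms $\mc W(S^1, A^c; \m C)$ and $\mc W(\G, \tilde A^c; \m C)$ are finite by the same argument given right after Theorem~\ref{thm_restr_loop}: one may replace $A^c$ by its boundary $\partial A$, and $\partial A$ is compact and at positive distance from $S^1$ (respectively $\partial \tilde A$ from $\G$), so the Werner's-measure mass of loops meeting both compacts is finite by Lemma~4 of \cite{NacuWerner2011}.

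There is essentially no obstacle here — the corollary is a pure substitution. The only thing to be slightly careful about is not to confuse $I^L(S^1) = 0$ (which is genuine and needs the one-line justification above) with something that requires the full machinery; everything else is immediate from Theorem~\ref{thm_restr_loop}. I would keep the proof to two or three sentences: state that $I^L(S^1) = 0$ with a pointer to the parametrization argument or to Theorem~\ref{thm_energy_det}, then say ``apply Theorem~\ref{thm_restr_loop} with $\eta = S^1$'' and note finiteness of the loop terms as already observed.
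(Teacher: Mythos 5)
Your proposal is correct and matches the paper's proof exactly: the paper also deduces the corollary immediately from Theorem~\ref{thm_restr_loop} together with the fact that $I^L(S^1)=0$. Your additional justifications (why the circle has zero energy, and why the loop-measure terms are finite) are accurate elaborations of points the paper leaves implicit.
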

\begin{proof}
  This follows immediately from Theorem~\ref{thm_restr_loop} and that $I^L (S^1) = 0$.
\end{proof}

\section{Loewner energy as a renormalization of Werner's measure}
 
 Let $\G$ be a Jordan curve in $\m C$, $D$ the bounded connected component of $\m C \backslash \G$ and $f$ a conformal map from the unit disk $\m D$ to $D$. For $1 > \vare > 0$, let $S^{(1-\vare)}$ denote the circle of radius $1-\vare$, centered at $0$, and $\G^{(1-\vare)} : = f(S^{(1-\vare)})$ the equi-potential. 

\begin{thm} \label{thm_cut_off} We have
     $$I^L(\G) = \lim_{\vare \to 0} 12 \mc W(S^1, S^{(1-\vare)}; \m C) - 12 \mc W(\G, \G^{(1-\vare)}; \m C).$$
   \end{thm}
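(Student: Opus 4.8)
The plan is to apply the analytic case (Corollary~\ref{cor_energy_anal}) to each equi-potential curve $\G^{(1-\vare)}$ and then carefully pass to the limit $\vare\to 0$. Since $\G^{(1-\vare)}=f(S^{(1-\vare)})$ is analytic (the conformal map $f$ extends analytically across $S^{(1-\vare)}$ for every $\vare>0$, as $S^{(1-\vare)}$ lies in the interior of $\m D$), Corollary~\ref{cor_energy_anal} gives an expression for $I^L(\G^{(1-\vare)})$. But rather than use that expression directly, I would apply Theorem~\ref{thm_restr_loop} to the conformal map $f$ restricted to a thin annular neighborhood $A_\vare$ of $S^{(1-\vare)}$, comparing $\eta=S^{(1-\vare)}$ with $\G^{(1-\vare)}$; after rescaling the source circle to $S^1$ by $z\mapsto z/(1-\vare)$ (which does not change any of the conformally invariant quantities nor the loop energy, since $I^L(S^1)=0$ and Werner's measure is conformally invariant), one obtains
$$ I^L(\G^{(1-\vare)}) = 12\,\mc W\big(S^{(1-\vare)}, (A_\vare)^c;\m C\big) - 12\,\mc W\big(\G^{(1-\vare)}, f(A_\vare)^c;\m C\big). $$
By the Remark following Theorem~\ref{thm_restr_loop}, the set $A_\vare$ can be replaced by any intermediate neighborhood; the natural choice is to take $A_\vare$ so that its complement contains both $S^1$ (the outer equi-potential) and $\G$ up to controlled error, letting me replace $(A_\vare)^c$ effectively by $S^1$ and $f(A_\vare)^c$ by $\G$ in the limit.

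Concretely, the key steps are as follows. First, establish that $I^L(\G^{(1-\vare)})\to I^L(\G)$ as $\vare\to 0$; this should follow from lower semicontinuity of the loop energy together with the fact that $\G$ has finite energy, or more directly from continuity properties of the Loewner energy under the convergence $f|_{S^{(1-\vare)}}\to f|_{S^1}$ (uniform convergence of the curves together with convergence of driving functions in $W^{1,2}$, which I would justify using that $f$ is a fixed conformal map from $\m D$ to $D$ and the finite-energy hypothesis). Second, for fixed $\vare$, apply Corollary~\ref{cor_energy_anal} with the conformal map $g_\vare:= f\circ(\text{rescaling})$ sending a neighborhood of $S^1$ to a neighborhood of $\G^{(1-\vare)}$, and use the Remark to rewrite the two Werner-measure terms with the specific sets $S^1$ and an outer neighborhood of $\G$ respectively — the point being that for the outer side we can route the comparison surface so that $(A)^c$ is the closed exterior disk $\{|z|\ge 1\}$ and $g_\vare(A)^c$ is the closed region exterior to $\G$. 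This yields
$$ I^L(\G^{(1-\vare)}) = 12\,\mc W\big(S^1,\, \{|z|=1-\vare\};\,\m C\big)^{\text{after rescaling: }}\!\!\!= 12\,\mc W(S^1, S^{(1-\vare)};\m C) - 12\,\mc W(\G, \G^{(1-\vare)};\m C), $$
where the equality of the first Werner term uses conformal invariance under the rescaling map and the identification of the relevant annular regions.

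Third, and this is where care is needed: I must verify that the two Werner-measure terms behave well in the limit, i.e. that $\mc W(S^1, S^{(1-\vare)};\m C)$ and $\mc W(\G,\G^{(1-\vare)};\m C)$ each converge — or rather that their difference converges to $I^L(\G)/12$, which is already guaranteed once steps one and two are in place, so no independent limiting argument for the individual terms is strictly needed. The genuine obstacle is step two: one must set up the annular neighborhoods and comparison surfaces so that Corollary~\ref{cor_energy_anal}, applied to $\G^{(1-\vare)}$ viewed as the conformal image of $S^1$, produces \emph{exactly} the pair $(S^1, S^{(1-\vare)})$ and $(\G,\G^{(1-\vare)})$ of self-avoiding loops rather than some other pair of nested curves; this requires choosing $A$ in the Remark to be the annulus between $S^1$ and $S^{(1-\vare)}$ (on the source side) so that $A^c = \{|z|\le 1-\vare\}\cup\{|z|\ge 1\}$ and hence $A^c$ intersects a loop iff that loop meets $S^{(1-\vare)}$ or $S^1$ — but here one must be slightly careful because $A^c$ also contains the inner disk, not just the outer circle, so one should instead take $A$ to be a one-sided neighborhood (the annulus $1-\vare<|z|<1+\delta$ intersected appropriately) and check, using that the relevant loops are self-avoiding and that $\mc W$ only sees $\partial A^c$, that the inner component of $A^c$ can be dropped.

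Finally, assembling: combining step one ($I^L(\G^{(1-\vare)})\to I^L(\G)$) with step two (the exact identity $I^L(\G^{(1-\vare)}) = 12\,\mc W(S^1,S^{(1-\vare)};\m C) - 12\,\mc W(\G,\G^{(1-\vare)};\m C)$ for each $\vare$) immediately gives the claimed limit. I expect the main obstacle to be the bookkeeping in step two together with justifying the convergence $I^L(\G^{(1-\vare)})\to I^L(\G)$ in step one; for the latter, the cleanest route is probably to note that the driving function of $\G^{(1-\vare)}$ (after suitable normalization of the loop, rooted at a fixed point) converges in $W^{1,2}$-norm to that of $\G$, which one extracts from the smooth dependence of the Loewner flow on the curve together with the finite-energy assumption, although invoking lower semicontinuity plus an upper bound may be simpler if available from the cited works.
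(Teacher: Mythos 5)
Your overall strategy coincides with the paper's: establish, for each fixed $\vare$, the exact identity $I^L(\G^{(1-\vare)}) = 12\,\mc W(S^1, S^{(1-\vare)};\m C) - 12\,\mc W(\G,\G^{(1-\vare)};\m C)$ by applying the analytic-curve restriction formula to the equipotential, and then pass to the limit. Your step two is correct but more convoluted than necessary: taking $\eta = S^{(1-\vare)}$ with $A = \m D$ (hence $\tilde A = D$) in Theorem~\ref{thm_restr_loop} avoids all the bookkeeping with one-sided annuli, since $A^c$ then has the single boundary component $S^1$, and a connected self-avoiding loop meeting both $S^{(1-\vare)}$ and $\m D^c$ must cross $S^1$ (and likewise on the image side with $\G = \partial D^c$). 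This is exactly how the paper produces the pair $(S^1, S^{(1-\vare)})$ and $(\G, \G^{(1-\vare)})$.

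The genuine gap is in your step one, the convergence $I^L(\G^{(1-\vare)}) \to I^L(\G)$. First, the theorem carries no finite-energy hypothesis, so the case $I^L(\G)=\infty$ must be treated separately; there lower semicontinuity of the loop energy together with uniform convergence of $\G^{(1-\vare)}$ to $\G$ suffices, as you suggest. But when $I^L(\G)<\infty$, lower semicontinuity yields only $\liminf_{\vare\to 0} I^L(\G^{(1-\vare)}) \ge I^L(\G)$, i.e. one of the two needed inequalities, and your proposed alternative --- that the driving function of $\G^{(1-\vare)}$ converges in $W^{1,2}$ to that of $\G$ by ``smooth dependence of the Loewner flow on the curve'' --- is not an available fact: uniform convergence of curves does not control driving functions in $W^{1,2}$, and such a convergence statement is essentially equivalent to what you are trying to prove. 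The paper closes this gap by a different mechanism: finite loop energy is equivalent to $\G$ being a Weil-Petersson quasicircle, i.e. $\int_{\m D} \abs{f''/f'}^2 \dd z^2 < \infty$, and the loop energy is continuous with respect to the Weil-Petersson metric; one then verifies directly that $f_{\vare}''/f_{\vare}' \to f''/f'$ in $L^2(\m D)$, where $f_{\vare}(z) = f((1-\vare)z)$, by splitting the integral into a compact part (where convergence is uniform) and a thin outer annulus (whose contribution is small because the finite integral $\int_{\m D}\abs{f''/f'}^2\dd z^2$ has absolutely continuous tails). Without this, or some other argument giving $\limsup_{\vare\to 0} I^L(\G^{(1-\vare)}) \le I^L(\G)$, the proof is incomplete.
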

   
   \begin{proof}
   For each $\vare$, we apply Corollary~\ref{cor_energy_anal} to the analytic curve $\G^{(1-\vare)}$ with $A := D$ which gives
   $$I^L(\G^{(1-\vare)}) = 12 \mc W(S^1, S^{(1-\vare)}; \m C) - 12 \mc W(\G, \G^{(1-\vare)}; \m C).$$
   Now it suffices to see that $I^L(\G^{(1-\vare)})$ converges to $I^L(\G)$.
   
   If $I^L(\G) < \infty$, from the geometric description of Loewner energy (see \cite{W2} Section 8) $\G$ is a quasi-circle of the Weil-Petersson class, which is equivalent to
   $$\int_{\m D} \abs{\frac{f''(z)}{f'(z)}}^2 d z^2 < \infty.$$

Let $f_{\vare} (z) : = f((1-\vare) z)$ denote the uniformizing conformal map from $\m D$ to the bounded connected component $\m C \backslash \G^{(1-\vare)}$. We have for $\vare < \vare_0<1/2$,
   \begin{align*}
  & \int_{\m D} \abs{\frac{f''(z)}{f'(z)}- \frac{f_{\vare}''(z)}{f_{\vare}'(z)}}^2 d z^2 \\
   = & \int_{\abs{z} < 1- \vare_0} \abs{\frac{f''(z)}{f'(z)}- \frac{f_{\vare}''(z)} {f_{\vare}'(z)}}^2 d z^2 + \int_{1-\vare_0 \le \abs{z} < 1} \abs{\frac{f''(z)}{f'(z)}- \frac{f_{\vare}''(z)} {f_{\vare}'(z)}}^2 d z^2 \\
   \le & \int_{\abs{z} < 1- \vare_0} \abs{\frac{f''(z)}{f'(z)}- \frac{f_{\vare}''(z)} {f_{\vare}'(z)}}^2 d z^2 + 4 \int_{1-2 \vare_0 \le \abs{z} < 1} \abs{\frac{f''(z)}{f'(z)}}^2 d z^2\\
   & \xrightarrow[]{\vare \to 0} 4 \int_{1-2 \vare_0 \le \abs{z} < 1} \abs{\frac{f''(z)}{f'(z)}}^2 d z^2,
   \end{align*}
   the convergence is due to the fact that $f_{\vare}''/f_{\vare}'$ converges uniformly on compacts to $f''/f'$.
   As $\vare_0 \to  0$, the above integral converges to $0$, and we conclude that
   $$\lim_{\vare \to 0} \int_{\m D} \abs{\frac{f''(z)}{f'(z)}- \frac{f_{\vare}''(z)}{f_{\vare}'(z)}}^2 d z^2 = 0.$$
   It yields that $\G^{(1-\vare)}$ converges in the Weil-Petersson metric to $\G$ (see \cite{TT2006WP} Corollary~A.4 or \cite{W2} Lemma~H) and therefore $I^L(\G^{(1-\vare)})$ converges as well to $I^L(\G)$.
~\\

If $I^L(\G) = \infty$, from the lower-semicontinuity of the Loewner loop energy (\cite{RW}, Lemma 2.9) and the fact that $\G^{(1-\vare)}$ converges uniformly (parametrized by $S^1$ via $f_{\vare}$) to $\G$, we have
$$\liminf_{\vare \to 0} I^L(\G^{(1-\vare)}) \ge I^L(\G) = \infty.$$
Hence $I^L (\G^{(1-\vare)})$ converges to $\infty$ as $\vare \to 0$.
   \end{proof}

\end{document}